\newtheorem{theorem}{Theorem}[section]
\newtheorem{lemma}[theorem]{Lemma}
\theoremstyle{definition}
\newtheorem{proposition}[theorem]{Proposition}
\newtheorem{corollary}[theorem]{Corollary}
\theoremstyle{remark}
\numberwithin{equation}{section}
\newcommand{\diffspec}{\operatorname{Spec^\Delta}}
\title{Global sections of structure sheaves of Keigher rings\footnote{This paper is dedicated to Jerald J. Kovacic}
}
\author{Dmitry Trushin\footnote{The author was partially supported by the grants: NSF
CCF-0964875 and 0952591} }
\date{}
\begin{document}

\maketitle

\begin{abstract}
Answering a question of J.~Kovacic, we show that, for any Keigher
ring, its differential spectrum coincides with the differential
spectrum of the ring of global sections of the structure sheaf. In
particular, we obtain the answer for Ritt algebras, that is,
differential rings containing the rational numbers.
\end{abstract}

\section*{Introduction}

The study of differential schemes began in~\cite{K1,K2,K3,K4,K5} and
was continued in~\cite{CF1,CF2} and~\cite{B1}. In~\cite{CF3}, a
different approach is taken than what we take here; in particular,
the definition of the structure sheaf is different. \cite{B2} has
yet another approach. In that work, a differential scheme is a
scheme whose structure sheaf consists of differential rings.
Further, the theory of differential schemes was developed
in~\cite{J,J1}.

We will use the definition of a structure sheaf $\mathcal O$ on
the differential spectrum $X$ of a differential ring $A$ as
in~\cite{J,J1}. It turns out that the ring of global sections of the
structure sheaf does not necessarily coincide with the initial ring.
Then, it is natural to ask whether the differential spectrum of the
ring of global sections $\mathcal O(X)$ coincides with the
differential spectrum of the initial differential ring $A$. Several
situations in which  the desired equality holds are described
in~\cite[Proposition~10.5]{J}. However, the most interesting case is
the case of Ritt algebras, that is differential rings containing
$\mathbb{Q}$. \cite[Example~7.4]{J1} shows that
\cite[Proposition~10.5]{J} cannot be applied to all Ritt algebras.
We resolve this issue for a wider class of differential rings -- the
Keigher rings~\cite[Definition~2.3]{J}.

The idea of the proof is the following. First, we modify the
structure sheaf to the one that is both easier for us to deal with
and does not lose the information about the differential spectrum.
Using simple topological fact, we show that the differential
spectrum of global sections of the new sheaf coincides with that of
the initial ring. In our method, the hypothesis that the ring is a
Keigher ring is used essentially.

The paper is organized as follows. In Section~\ref{sec:sec1}, we
recall some basic definitions taken from~\cite{J} and~\cite{J1}. In
Section~\ref{sec:sec2}, we construct the above mentioned auxiliary
sheaf and investigate its properties. The topological result is
presented in Lemma~\ref{top} and the main result of the section is
in Theorem~\ref{homeo}. In Section~\ref{sec:main}, we prove the main
result (Theorem~\ref{main}). Corollary~\ref{maincor} gives the
answer in the case of Ritt algebras.

\section{Terms and notation}\label{sec:sec1}

Throughout the text the word ring means an associative commutative
ring with a unit. All homomorphisms preserve the unit. A
differential ring is a ring with finitely many pairwise commuting
derivations. A differential ring is called a Keigher ring if for any
differential ideal $\frak a$ its radical $\frak r(\frak
a)=\left\{x\mid x^n\in\frak a\right\}$ is a differential ideal too.
For any differential ring $A$, the set of all prime differential
ideals is denoted by $\diffspec A$ and is called the differential
spectrum of $A$. For any subset $E\subseteq A$, the set of all prime
differential ideals containing $E$ is denoted by $V(E)$. Defining
$V(E)$ as closed subsets, we provide the differential spectrum with
the Kolchin topology. For any subset $E\subseteq A$, the smallest
radical differential ideal containing $E$ will be denoted by
$\{E\}$.

Denote the differential spectrum of $A$ by $X$ and let $s \in A$.
Then, the set of all prime differential ideals not containing $s$ is
denoted by $X_s$. For the set $S=\{s^n\}_{n=0}^\infty$, the
localization $S^{-1}A$ is denoted by $A_s$. If $\frak p$ is a prime
ideal then  we denote the ring $S^{-1}A$ by $A_\frak p$, where
$S=A\setminus \frak p$. For any differential homomorphism $f\colon
A\to B$, we define the corresponding map $$f^*\colon \diffspec B\to
\diffspec A,\quad f^*(\frak p)=f^{-1}(\frak p).$$

We will briefly recall the notion of structure sheaf on the differential
spectrum of a differential ring $A$. Consider the set
of all functions
$$
\left\{f\colon X\to \bigcup_{\frak p\in X} A_\frak p\mid f(\frak p)\in
A_\frak p\right\}.
$$
A function $f$ will be called regular at a point $\frak p$ if there
exists an open neighborhood $U$  of $\frak p$ and $a, b\in A$, where
for all $\frak q\in U$ we have $b\not\in\frak q$, such that for all
$\frak q\in U$ it follows that $f(\frak q)=a/b$ in $A_\frak q$. The
set of all functions that are regular at all points of the open
subset $U$ will be denoted by $\mathcal O(U)$. The family $\mathcal
O(U)$ with the restriction homomorphisms form a sheaf of
differential rings~\cite[Section~4]{J}. We will call this sheaf a
structure sheaf.

The ring $\mathcal O(X)$ will be denoted by $\widehat{A}$. There is
a map $\iota\colon A\to \widehat A$ such that $a\mapsto \varphi$,
where $\varphi(\frak p)=a/1$ in $A_\frak p$ One can show that
$\iota$ is a differential homomorphism. Further details about
the structure sheaf can be found in~\cite{J}.

\section{Auxiliary sheaf}\label{sec:sec2}

In this section, we construct another sheaf for which the desired
problem will be solved by a straightforward calculation. The general
proof given in Section~3 is based on a reduction of the initial
problem to the problem solved in this section.

Let $A$ be an arbitrary differential ring. For any prime
differential ideal $\frak p$, its residue field will be denoted by
$K(\frak p)$, that is, the fraction field of $A/\frak p$. Consider the
set of all functions
$$
\left\{f\colon X\to \bigcup_{\frak p\in X}K(\frak p)\mid f(\frak p)\in
K(\frak p)\right\}.$$
A function $f$ will be called regular at a point $\frak p$ if there
exists an open neighborhood $U$ of $\frak p$ and  $a,b\in A$,
where for all $\frak q\in U$ we have $b\not\in \frak q$, such that
for all $\frak q\in U$ it follows that $f(\frak q)=a/b$ in $K(\frak
q)$. The set of all functions that are regular at all points of $U$ will
be denoted by $\mathcal O'(U)$. The family of all such rings with
the restriction homomorphisms form a sheaf of differential rings on $X$.

Define a differential homomorphism $\iota'\colon A\to \mathcal
O'(X)$ by the rule $a\mapsto \varphi$ such that $\varphi(\frak p)=a$
in $K(\frak p)$. Our proof is based on the following statement.

\begin{lemma}\label{top}
Let $\varphi\colon X\to Y$ be a map of topological spaces and let
two covers $U_\alpha$ and $V_\alpha$ of $X$ and $Y$, respectively,
be given. Then, if for any pair of indices $\alpha$ and $\alpha'$
the map $\varphi\colon U_{\alpha}\cap U_{\alpha'}\to V_{\alpha}\cap
V_{\alpha'}$ is well-defined and is a homeomorphism then
$\varphi\colon X\to Y$ is a homeomorphism too.
\end{lemma}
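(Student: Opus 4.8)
The plan is to verify the three things that make $\varphi$ a homeomorphism — that it is well-defined as a map $X \to Y$, that it is a bijection, and that both $\varphi$ and $\varphi^{-1}$ are continuous — by gluing the corresponding facts from the pieces $U_\alpha \cap U_{\alpha'}$. First I would observe that since the $U_\alpha$ cover $X$ and each restriction $\varphi\colon U_\alpha \cap U_\alpha \to V_\alpha \cap V_\alpha$, i.e. $\varphi\colon U_\alpha \to V_\alpha$, is (in particular) a well-defined map into $V_\alpha \subseteq Y$, the map $\varphi\colon X \to Y$ is well-defined and takes $U_\alpha$ into $V_\alpha$. For injectivity, suppose $\varphi(x) = \varphi(x')$ with $x \in U_\alpha$ and $x' \in U_{\alpha'}$; then $\varphi(x) \in V_\alpha \cap V_{\alpha'}$, and since $\varphi$ maps $U_\alpha \to V_\alpha$ and $U_{\alpha'} \to V_{\alpha'}$, both $x$ and $x'$ actually lie in $\varphi^{-1}(V_\alpha \cap V_{\alpha'})$; I would need to check that this preimage is contained in $U_\alpha \cap U_{\alpha'}$ so that the hypothesized bijectivity of $\varphi\colon U_\alpha \cap U_{\alpha'} \to V_\alpha \cap V_{\alpha'}$ applies and forces $x = x'$. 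For surjectivity, any $y \in Y$ lies in some $V_\alpha$, which equals $V_\alpha \cap V_\alpha$, the image of $U_\alpha$ under the bijection $\varphi\colon U_\alpha \to V_\alpha$, so $y$ has a preimage.

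For continuity, I would use that a map is continuous iff its restriction to each member of an open cover is continuous (the $U_\alpha$ being open, as members of a cover in the intended reading). Each $\varphi|_{U_\alpha}\colon U_\alpha \to V_\alpha \hookrightarrow Y$ is continuous because $\varphi\colon U_\alpha \cap U_\alpha \to V_\alpha \cap V_\alpha$ is a homeomorphism and the inclusion $V_\alpha \hookrightarrow Y$ is continuous; hence $\varphi\colon X \to Y$ is continuous. The same argument applied to the cover $V_\alpha$ of $Y$ and the inverse maps $(\varphi|_{U_\alpha})^{-1}\colon V_\alpha \to U_\alpha \hookrightarrow X$ shows the set-theoretic inverse $\varphi^{-1}$ is continuous, so $\varphi$ is a homeomorphism.

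The one genuine subtlety — and the step I expect to be the main obstacle — is the compatibility bookkeeping hidden in the phrase ``the map $\varphi\colon U_\alpha \cap U_{\alpha'} \to V_\alpha \cap V_{\alpha'}$ is well-defined.'' One must read this as asserting precisely that $\varphi(U_\alpha \cap U_{\alpha'}) \subseteq V_\alpha \cap V_{\alpha'}$ and, implicitly (for the homeomorphism claim to have content), that $\varphi^{-1}(V_\alpha \cap V_{\alpha'}) \cap (U_\alpha \cup U_{\alpha'}) \subseteq U_\alpha \cap U_{\alpha'}$, or something that yields this; the cleanest route is to note that taking $\alpha' = \alpha$ gives $\varphi(U_\alpha) \subseteq V_\alpha$ and that $\varphi\colon U_\alpha \to V_\alpha$ is a homeomorphism, and then everything above goes through because $x \in U_\alpha$ with $\varphi(x) \in V_{\alpha'}$ together with $\varphi(U_{\alpha'}) \subseteq V_{\alpha'}$ and injectivity of $\varphi|_{U_\alpha \cup U_{\alpha'}}$ (which itself follows by the same preimage argument) pins down $x \in U_{\alpha'}$. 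Once the statement is unwound this way, each of the three verifications is a one-line gluing argument, so the proof is short; the only care needed is to make the implicit hypotheses explicit and to invoke the local-criterion for continuity on open covers.
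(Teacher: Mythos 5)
Your overall architecture (bijectivity plus continuity of both directions via the open covers) is fine, and your treatment of surjectivity and of continuity of $\varphi$ and $\varphi^{-1}$ by restriction to the covers is correct. The problem is injectivity, which is the entire content of the lemma, and there your argument does not close. Your first attempt requires the containment $\varphi^{-1}(V_\alpha\cap V_{\alpha'})\subseteq U_\alpha\cap U_{\alpha'}$, which you correctly flag as unverified; it is in fact equivalent to the injectivity you are trying to prove, so it cannot be assumed. Your second attempt, in the last paragraph, tries to pin $x\in U_{\alpha'}$ by invoking ``injectivity of $\varphi|_{U_\alpha\cup U_{\alpha'}}$ (which itself follows by the same preimage argument)'' --- but injectivity on every union $U_\alpha\cup U_{\alpha'}$ is exactly global injectivity (any two points lie in some such union), so this is circular.

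The missing idea is to use the \emph{surjectivity} of the restriction $\varphi\colon U_\alpha\cap U_{\alpha'}\to V_\alpha\cap V_{\alpha'}$ to manufacture a witness in the intersection, and then compare points inside a \emph{single} chart. Concretely (this is the paper's argument): if $x\in U_\alpha$, $y\in U_{\alpha'}$ and $z=\varphi(x)=\varphi(y)$, then $z\in V_\alpha\cap V_{\alpha'}$, so by hypothesis there is $w\in U_\alpha\cap U_{\alpha'}$ with $\varphi(w)=z$. Now $x$ and $w$ both lie in $U_\alpha$, where $\varphi$ is injective (the case $\alpha'=\alpha$ of the hypothesis), so $x=w$; likewise $y=w$, hence $x=y$. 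No statement about preimages of $V_\alpha\cap V_{\alpha'}$ or about injectivity on unions is needed. With injectivity repaired this way, the rest of your write-up goes through as you describe.
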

\begin{proof}

Take $\alpha=\alpha'$. Then, $\varphi$ is a local
homeomorphism. We need to prove that $\varphi$ is injective. Let
$x\in U_\alpha$ and $y\in U_{\alpha'}$ be such that $z=f(x)=f(y)$.
Then, $z\in V_{\alpha}\cap V_{\alpha'}$. Since $\varphi$ is a
homeomorphism of $U_{\alpha}\cap U_{\alpha'}$ and
$V_{\alpha}\cap V_{\alpha'}$,  the intersection $U_{\alpha}\cap
U_{\alpha'}$ is not empty and contains $w$ such that $\varphi(w)=z$.
But $x$ and $w$ both belong to $U_{\alpha}$ and their images
coincide. Thus, $x=w$. A similar argument shows that $y=w$.
\end{proof}

Now, we will prove some auxiliary facts.

\begin{lemma}\label{lem:22}
Let $A$ and $B$ be differential rings,  $\nu\colon A\to B$ be a
differential homomorphism, and $B$ be differentially generated over
$A$ by a single element $\eta$ such that there exists a family of
elements $b_1,\ldots,b_n$ of $A$ with conditions $b_k \eta\in
\nu(A)$ and $\{b_1,\ldots,b_n\}=A$. Then, the contraction map
$$\nu^*\colon \diffspec B\to  V(\ker \nu)$$ is a homeomorphism.
\end{lemma}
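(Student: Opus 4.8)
The plan is to localize $A$ and $B$ at each of the elements $b_k$, observe that these two localizations coincide, and then glue the resulting local homeomorphisms by means of Lemma~\ref{top}.

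First I would reduce to the case where $\nu$ is injective. Since $\nu$ is a differential homomorphism, $\ker\nu$ is a differential ideal, $\nu$ factors through the injective differential homomorphism $\bar\nu\colon A/\ker\nu\to B$, and the canonical map $\diffspec(A/\ker\nu)\to\diffspec A$ is a homeomorphism onto $V(\ker\nu)$. The images $\bar b_1,\dots,\bar b_n$ of the $b_k$ still satisfy $\bar b_k\eta\in\bar\nu(A/\ker\nu)$, and $\{\bar b_1,\dots,\bar b_n\}=A/\ker\nu$, because a prime differential ideal of $A/\ker\nu$ containing all $\bar b_k$ would contract to a prime, hence radical, differential ideal of $A$ containing $\{b_1,\dots,b_n\}=A$, which is absurd. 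So from now on I regard $A$ as a differential subring of $B$ and aim to show that $\nu^*\colon\diffspec B\to\diffspec A$ is a homeomorphism.

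The heart of the argument is that, for every $k$, the localization map $A_{b_k}\to B_{b_k}$ is an isomorphism of differential rings. Injectivity holds because localization is flat. For surjectivity, note that in $B_{b_k}$ one has $\eta=(b_k\eta)/b_k$, which lies in the image of $A_{b_k}$ since $b_k\eta\in A$; the image of $A_{b_k}$ is a differential subring of $B_{b_k}$ containing $\eta$, hence it contains all derivatives of $\eta$, and therefore, as $B$ is differentially generated over $A$ by the single element $\eta$, it is all of $B_{b_k}$. Using the standard identification of $\diffspec C_s$ with the set of prime differential ideals of $C$ not containing $s$, it follows that $\nu^*$ restricts to a homeomorphism from $\{\mathfrak q\in\diffspec B\mid b_k\notin\nu^{-1}(\mathfrak q)\}$ onto $X_{b_k}$, and — localizing once more at $b_l$ — to a homeomorphism on every pairwise intersection of these open sets onto $X_{b_k}\cap X_{b_l}$.

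It remains to observe that the sets $X_{b_k}$ cover $\diffspec A$ and the sets $\{\mathfrak q\in\diffspec B\mid b_k\notin\nu^{-1}(\mathfrak q)\}$ cover $\diffspec B$: a prime differential ideal containing all the $b_k$ (respectively, contracting along $\nu$ to such an ideal of $A$) would be a radical differential ideal of $A$ containing $\{b_1,\dots,b_n\}=A$, which is impossible. Since $\nu^*$ is continuous and restricts to a homeomorphism on each pairwise intersection of these two covers, Lemma~\ref{top} gives that $\nu^*\colon\diffspec B\to\diffspec A$, that is $\nu^*\colon\diffspec B\to V(\ker\nu)$ before the reduction, is a homeomorphism. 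I expect the step $A_{b_k}\cong B_{b_k}$ to be the only real obstacle: one must check that inverting $b_k$ does not merely put $\eta$ into $A_{b_k}$ but collapses the whole differential extension $B/A$, which is precisely where the hypothesis that $B$ is differentially generated by a single element is used; the rest is the routine reduction modulo $\ker\nu$, elementary facts about $V(-)$ and localization, and Lemma~\ref{top}.
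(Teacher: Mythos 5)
Your proof is correct and takes essentially the same route as the paper: reduce to $\nu$ injective, show that localization at each $b_k$ (and at each product $b_kb_l$ for the pairwise intersections) turns $\nu$ into an isomorphism $A_{b_k}\cong B_{b_k}$, and glue the resulting local homeomorphisms with Lemma~\ref{top}. Your surjectivity argument for $A_{b_k}\to B_{b_k}$ is just a spelled-out version of the paper's computation $B_{b_k}=(A\{\eta\})_{b_k}=A\{a_k/b_k,1/b_k\}=A_{b_k}$.
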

\begin{proof}
First, replacing $A$ by $A/\ker\nu$, we may suppose that $\nu$ is
injective. Let $X=\diffspec A$ and $Y=\diffspec B$. Now, we will
divide the differential spectra into finitely many open subsets as
follows:
\begin{gather*}
X=X_{b_1}\cup\ldots\cup X_{b_n}\\
Y=Y_{b_1}\cup\ldots\cup Y_{b_n}
\end{gather*}
We need to check the hypothesis of the previous lemma for the
covers $\{X_{b_i}\}$ and $\{Y_{b_i}\}$. The proof is based
on~\cite[Exercise.~21 (i,ii) p.~46]{AM}. We replace $X_{b_i}$ by the
differential spectrum of $A_{b_i}$ and  $Y_{b_i}$ by the
differential spectrum of $B_{b_i}$. From the mentioned exercise, it
follows that the restriction of $\nu^*$ coincides with $\nu^*_{b_i}$
(see~\cite[Excercise~21 (ii) p.~47]{AM}).

In order to show the desired result, we will calculate both
localizations $A_{b_i}$ and $B_{b_i}$. Let $b_i\eta=a_i\in A$.
Consider
\[
B_{b_i}=(A\{\eta\})_{b_i}=A\{a_i/b_i,1/b_i\}=A\{1/b_i\}=A_{b_i}
\]
Therefore, the localization of  $\nu\colon A\to B$ by $b_i$ induces
the identity map
\[
\nu_{b_i}=Id\colon A_{b_i}\to B_{b_i}=A_{b_i}.
\]
So, the desired homeomorphism between $X_{b_i}$ and $Y_{b_i}$ has
been obtained.

The last step of the proof is to show that $\nu^*$ induces a
homeomorphism between $X_{b_i}\cap X_{b_j}$ and $Y_{b_i}\cap
Y_{b_j}$. Having noted that $X_{b_i}\cap X_{b_j}=X_{b_ib_j}$ and
$Y_{b_i}\cap Y_{b_j}=Y_{b_ib_j}$ and applying the previous argument
with $b_ib_j$ instead of $b_i$, we obtain the result.
\end{proof}

Using induction, we show the following.

\begin{lemma}\label{steplemma}
Let $A$ and $B$ be differential rings, $\nu\colon A\to B$ be a
differential homomorphism, and $B$ be differentially finitely
generated over $A$ such that for any generator $\eta$ there exist $b_1,\ldots,b_n$ satisfying $b_k \eta\in
\nu(A)$ and $\{b_1,\ldots,b_n\}=A$. Then, $$\nu^*\colon \diffspec B\to
V(\ker \nu)$$ is a homeomorphism.
\end{lemma}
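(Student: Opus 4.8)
The plan is to induct on the number $m$ of differential generators of $B$ over $A$; the base case $m=1$ is exactly Lemma~\ref{lem:22}. For the inductive step, I would insert an intermediate differential ring between $A$ and $B$ and apply the one-generator statement at one end and the induction hypothesis at the other. Concretely, fix differential generators $\eta_1,\dots,\eta_m$ of $B$ over $A$, each with its witnessing family $b^{(i)}_1,\dots,b^{(i)}_{n_i}\in A$ as in the hypothesis, and let $C\subseteq B$ be the differential subring differentially generated over $\nu(A)$ by $\eta_1,\dots,\eta_{m-1}$. Then $\nu$ factors as $A\xrightarrow{\,f\,}C\xrightarrow{\,g\,}B$, where $f$ is $\nu$ with codomain restricted to $C$ and $g$ is the inclusion; moreover $C$ is differentially generated over $A$ through $f$ by the $m-1$ elements $\eta_1,\dots,\eta_{m-1}$, while $B$ is differentially generated over $C$ through $g$ by the single element $\eta_m$, since $B=C\{\eta_m\}$.

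The second step is to check that $f$ and $g$ inherit the hypotheses of, respectively, this lemma (with one fewer generator) and Lemma~\ref{lem:22}. For $g\colon C\to B$ take the family $\nu(b^{(m)}_1),\dots,\nu(b^{(m)}_{n_m})\in C$: one has $\nu(b^{(m)}_k)\,\eta_m\in\nu(A)\subseteq C=g(C)$, and since a differential homomorphism sends a generating family of the unit radical differential ideal to a generating family of the unit radical differential ideal of the target, $\{\nu(b^{(m)}_1),\dots,\nu(b^{(m)}_{n_m})\}=C$. Hence Lemma~\ref{lem:22} applies to $g$, and as $\ker g=0$ it gives a homeomorphism $g^*\colon\diffspec B\to\diffspec C$. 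For $f\colon A\to C$, each generator $\eta_i$ with $1\le i\le m-1$ is witnessed by the same family $b^{(i)}_1,\dots,b^{(i)}_{n_i}$ as before, now with $b^{(i)}_k\eta_i\in\nu(A)=f(A)$; so the induction hypothesis applies to $f$ and yields a homeomorphism $f^*\colon\diffspec C\to V(\ker f)$.

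Finally, since $g$ is injective one has $\ker f=\ker\nu$, and from $\nu=g\circ f$ one gets $\nu^*=f^*\circ g^*$; therefore $\nu^*$ is the composition $\diffspec B\xrightarrow{g^*}\diffspec C\xrightarrow{f^*}V(\ker\nu)$ of two homeomorphisms and is a homeomorphism of $\diffspec B$ onto $V(\ker\nu)$. I do not expect a real obstacle here: the argument is a two-step factorization combined with induction. The points that need care are bookkeeping ones --- verifying that the witnessing families transfer correctly (in particular the elementary fact that the condition $\{b_1,\dots,b_n\}=A$ survives pushing the $b_k$ forward along a differential homomorphism, which is what makes the hypothesis available for $f\colon A\to C$), and tracking which differential spectrum each map lands in so that the composite has image exactly $V(\ker\nu)$.
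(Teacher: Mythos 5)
Your proposal is correct and follows essentially the same route as the paper: factor $\nu$ through the differential subring generated by the first $m-1$ generators, apply Lemma~\ref{lem:22} to the one-generator inclusion and the induction hypothesis to the other factor, and compose the two homeomorphisms. You are in fact somewhat more explicit than the paper in verifying that the witnessing families (and the condition $\{b_1,\dots,b_n\}=A$) transfer along the factorization, which the paper leaves implicit.
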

\begin{proof}

Let $B$ be differentially generated over $A$ by
$\eta_1,\ldots,\eta_n$, and $B_1$ be the subring of $B$
differentially generated over $A$ by $\eta_1,\ldots, \eta_{n-1}$.
Then, the homomorphism $\nu$ can be represented as the following
composition
\[
A\stackrel{\nu_1}{\longrightarrow}
B_1\stackrel{\nu_2}{\longrightarrow} B.
\]
By Lemma~\ref{lem:22}, $V(\ker\nu_2)$ is homeomorphic to $\diffspec
B$. By the inductive assumption, $\diffspec B_1$ and $V(\ker \nu_1)$
are homeomorphic, and, thus, their corresponding subspaces $V(\ker
\nu_2)$ and $V(\ker\nu)$ are homeomorphic too. Consequently,
$V(\ker\nu)$ and $\diffspec B$ are homeomorphic as well.
\end{proof}

\begin{lemma}\label{Limitlemma}
Let $A_\alpha$ be a direct system of differential $A$-algebras and
$\nu_\alpha\colon A\to A_\alpha$ be the corresponding homomorphisms.
Suppose that for any $\alpha$ the map $\nu^*_\alpha$ is a
homeomorphism. Then, $\nu^*$ is a homeomorphism, where $\nu\colon
A\to \varinjlim\limits_{\alpha} A_\alpha $ is the direct limit of
$\nu_\alpha$.
\end{lemma}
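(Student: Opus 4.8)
The plan is to identify $\diffspec(\varinjlim_\alpha A_\alpha)$ with the inverse limit of the spaces $\diffspec A_\alpha$ and then transport the homeomorphisms $\nu_\alpha^*$ through this identification. Write $B=\varinjlim_\alpha A_\alpha$, and let $\iota_\alpha\colon A_\alpha\to B$ and $\psi_{\alpha\beta}\colon A_\alpha\to A_\beta$ (for $\alpha\le\beta$) be the structural homomorphisms, so that $\nu=\iota_\alpha\circ\nu_\alpha$ for every $\alpha$, and hence $\nu^*=\nu_\alpha^*\circ\iota_\alpha^*$. Since a directed colimit of rings is the union of the images of the $A_\alpha$, one checks at once that $\ker\nu=\bigcup_\alpha\ker\nu_\alpha$, hence $V(\ker\nu)=\bigcap_\alpha V(\ker\nu_\alpha)$; I will show that $\nu^*\colon\diffspec B\to V(\ker\nu)$ is a homeomorphism, as in Lemmas~\ref{lem:22} and~\ref{steplemma}.

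First I would describe $\diffspec B$ concretely. Given $\frak q\in\diffspec B$, put $\frak q_\alpha=\iota_\alpha^{-1}(\frak q)\in\diffspec A_\alpha$; these satisfy $\psi_{\alpha\beta}^{-1}(\frak q_\beta)=\frak q_\alpha$, one has $B/\frak q=\varinjlim_\alpha A_\alpha/\frak q_\alpha$ by exactness of the direct limit, and $\frak q$ is recovered as $\frak q=\bigcup_\alpha\iota_\alpha(\frak q_\alpha)$, so $\frak q$ is determined by the family $(\frak q_\alpha)$. Conversely, any family $(\frak q_\alpha)$ of prime differential ideals with $\psi_{\alpha\beta}^{-1}(\frak q_\beta)=\frak q_\alpha$ arises from a unique $\frak q\in\diffspec B$: under these equalities the maps $A_\alpha/\frak q_\alpha\to A_\beta/\frak q_\beta$ are injective, so $\varinjlim_\alpha A_\alpha/\frak q_\alpha$ is a direct limit of domains along injective transition maps, hence a nonzero domain, and $\frak q:=\ker\bigl(B\to\varinjlim_\alpha A_\alpha/\frak q_\alpha\bigr)$ is the required prime differential ideal. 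This is the step I expect to carry the only real content; what follows is bookkeeping.

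Next I would check that $\nu^*$ is a bijection onto $V(\ker\nu)=\bigcap_\alpha V(\ker\nu_\alpha)$. For injectivity: if $\nu^*(\frak q)=\nu^*(\frak q')$, then for each $\alpha$ we get $\nu_\alpha^*(\frak q_\alpha)=\nu_\alpha^*(\frak q'_\alpha)$, whence $\frak q_\alpha=\frak q'_\alpha$ since $\nu_\alpha^*$ is injective, and so $\frak q=\frak q'$ by the previous paragraph. For surjectivity: given $\frak p\in\bigcap_\alpha V(\ker\nu_\alpha)$, set $\frak q_\alpha:=(\nu_\alpha^*)^{-1}(\frak p)$, which exists and is unique because $\nu_\alpha^*$ is a bijection onto $V(\ker\nu_\alpha)\ni\frak p$; applying this uniqueness to $\psi_{\alpha\beta}^{-1}(\frak q_\beta)$, which also maps to $\frak p$ under $\nu_\alpha^*$, gives $\psi_{\alpha\beta}^{-1}(\frak q_\beta)=\frak q_\alpha$, so the family is compatible and produces $\frak q\in\diffspec B$ with $\nu^*(\frak q)=\nu_\alpha^*(\iota_\alpha^*(\frak q))=\nu_\alpha^*(\frak q_\alpha)=\frak p$.

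Finally, for the topology: $\nu^*$ is continuous since $(\nu^*)^{-1}(V(E))=V(\nu(E))$, so it remains to see it is open onto $V(\ker\nu)$. The sets $(\diffspec B)_b$, $b\in B$, form a basis of the Kolchin topology, and writing $b=\iota_\alpha(a)$ one has $(\diffspec B)_b=(\iota_\alpha^*)^{-1}\bigl((\diffspec A_\alpha)_a\bigr)$; combining this with the bijection above gives $\nu^*\bigl((\diffspec B)_b\bigr)=V(\ker\nu)\cap\nu_\alpha^*\bigl((\diffspec A_\alpha)_a\bigr)$. Since $\nu_\alpha^*$ is a homeomorphism onto $V(\ker\nu_\alpha)$, the set $\nu_\alpha^*((\diffspec A_\alpha)_a)$ equals $W\cap V(\ker\nu_\alpha)$ for some open $W\subseteq\diffspec A$, and as $V(\ker\nu)\subseteq V(\ker\nu_\alpha)$ this meets $V(\ker\nu)$ in $W\cap V(\ker\nu)$, open in $V(\ker\nu)$. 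Thus $\nu^*$ sends basic opens to opens, so it is an open map onto $V(\ker\nu)$; being also a continuous bijection, it is a homeomorphism. The point to watch throughout is simply that, transported through the $\nu_\alpha^*$, the transition maps of the inverse system of the $\diffspec A_\alpha$ become the inclusions of the $V(\ker\nu_\alpha)$, which is why the limit collapses onto $V(\ker\nu)$.
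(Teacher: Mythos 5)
Your proof is correct and follows essentially the same route as the paper's: both establish the bijection by matching a prime differential ideal of the colimit with the compatible system of its contractions to the $A_\alpha$, and both prove openness on the basic open sets $(\diffspec B)_b$ by lifting $b$ to some finite stage $A_\alpha$. You are somewhat more careful than the paper --- you verify compatibility and primality of the limit ideal explicitly and track the images $V(\ker\nu_\alpha)$, whereas the paper tacitly treats each $\nu_\alpha^*$ as onto all of $\diffspec A$ (which is what holds in its application).
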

\begin{proof}

Denote $\varinjlim\limits_{\alpha} A_\alpha$  by $A'$. We will first
show that $\nu$ is bijective. Surjectivity: consider a prime
differential ideal $\frak p$ in $A$. Let $\frak p_\alpha$ be the
ideals in $A_\alpha$ corresponding to $\frak p$ that is
$\nu_\alpha^*(\frak p_\alpha)=\frak p$. Then,
$\varinjlim\limits_{\alpha} \frak p_\alpha$ is a prime differential
ideal in $A'$ contracting to $\frak p$.


Injectivity: suppose that there exist two prime differential ideals
$\frak q$ and $\frak q'$  in $A'$ contracting to $\frak p$. Then,
there exists $s\in A'$ such that $s\in \frak q'\setminus\frak
q$. It follows from \cite[Chapter~2, Exercise~14]{AM} that there are
$\alpha$ and $s_\alpha \in A_\alpha$ such that $s_\alpha$
maps to $s$. Let $\frak q_\alpha$ and $\frak q'_\alpha$ denote
the contractions of $\frak q$ and $\frak q'$ on $A_\alpha$, respectively.
Then, we have $s_\alpha\in\frak q'_\alpha\setminus\frak q_\alpha$.
But both  ideals contract to $\frak p$ and belong to
$A_\alpha$, contradiction.

We will show now that $\nu$ is a homeomorphism. We just need to
prove that the image of any principal open set is open. Let $s$ be
an arbitrary element of $A'$. We will demonstrate that the image of
$\diffspec (A')_s$  is open in $\diffspec A$. Indeed, for some
$\alpha$, there is $s_\alpha\in A_\alpha$ such that its
image coincides with $s$. Then, $$\diffspec
(A_\alpha)_{s_\alpha}=\diffspec (A')_s.$$ But, by the data, the image
of $\diffspec (A_\alpha)_{s_\alpha}$ is open in $\diffspec A$,
which finishes the proof.
\end{proof}

\begin{lemma}\label{lem:25}
For every $\eta\in \mathcal O'(X)$, there exist $b_1,\ldots,b_m\in A$ such that
$b_i\eta\in\iota'(A)$ and $\{b_1,\ldots,b_m\}=A$.
\end{lemma}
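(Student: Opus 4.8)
The plan is to analyze the section $\eta \in \mathcal O'(X)$ using the fact that $X = \diffspec A$ is quasi-compact, so that the local data defining $\eta$ can be packaged into finitely many fractions. Concretely, for each $\frak p \in X$ regularity of $\eta$ gives an open neighborhood on which $\eta = a/b$ in the residue fields, and since principal open sets $X_s$ form a basis we may shrink this neighborhood to some $X_{s(\frak p)}$ with $\eta = a_{\frak p}/b_{\frak p}$ on $X_{s(\frak p)}$ and $b_{\frak p} \notin \frak q$ for all $\frak q \in X_{s(\frak p)}$. The last condition means $X_{s(\frak p)} \subseteq X_{b_{\frak p}}$, and (after replacing $s(\frak p)$ by $s(\frak p) b_{\frak p}$, which only shrinks the set) we may assume $b_{\frak p} \mid$ the denominator, i.e. arrange $s(\frak p) \in \frak r(\{b_{\frak p}\})$-type containment; more simply, I would just take $b_{\frak p}$ itself as the distinguishing element and note $\eta = a_{\frak p}/b_{\frak p}$ on $X_{b_{\frak p}}$ after absorbing $s(\frak p)$. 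By quasi-compactness of $X$ finitely many of these cover $X$, giving $b_1,\dots,b_m$ with $\bigcup_i X_{b_i} = X$ and elements $a_i \in A$ such that $\eta = a_i/b_i$ in $K(\frak q)$ for every $\frak q \in X_{b_i}$.

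Next I would check the two claimed conclusions. For $\{b_1,\dots,b_m\} = A$: the condition $\bigcup_i X_{b_i} = X$ says no prime differential ideal contains all the $b_i$, i.e. $V(\{b_1,\dots,b_m\}) = \varnothing$; since a proper radical differential ideal is contained in a prime differential ideal, this forces $\{b_1,\dots,b_m\} = A$. For $b_i \eta \in \iota'(A)$: I claim $b_i \eta = \iota'(a_i)$ as elements of $\mathcal O'(X)$. It suffices to check equality of the two sections on each $X_{b_j}$, since these cover $X$. On $X_{b_i} \cap X_{b_j} = X_{b_i b_j}$ both $\eta = a_i/b_i$ and $\eta = a_j/b_j$ hold in every residue field $K(\frak q)$, so $b_i \eta$ evaluates to $b_i \cdot (a_i/b_i) = a_i$ there — matching $\iota'(a_i)$; and on the part of $X_{b_j}$ outside $X_{b_i}$ we instead use $\eta = a_j/b_j$, giving $b_i \eta = b_i a_j / b_j$ in $K(\frak q)$, which I must still show equals $a_i$ in $K(\frak q)$. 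This last point is the one subtle spot.

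The resolution of that spot is exactly the overlap compatibility of a section: on $X_{b_i b_j}$ we have $a_i / b_i = a_j / b_j$ in every residue field $K(\frak q)$, i.e. $b_j a_i - b_i a_j$ lies in every $\frak q \in X_{b_i b_j}$, hence in the radical differential ideal $\{b_i b_j\}$ (because $V(\{b_i b_j\}) = X \setminus X_{b_i b_j}$ and the radical differential ideal is the intersection of the primes over it, by the Keigher/radical-differential-ideal structure). Therefore $b_j a_i - b_i a_j \in \frak q$ for every $\frak q \in X_{b_j}$ as well (since $\{b_i b_j\} \subseteq \{b_j\}$ and every prime over $b_j$ contains $\{b_j\}$), so in $K(\frak q)$ for such $\frak q$ we get $b_i a_j / b_j = a_i$, completing the check that $b_i \eta = \iota'(a_i)$ on all of $X_{b_j}$, for every $j$. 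Patching over the cover $\{X_{b_j}\}$ gives $b_i \eta = \iota'(a_i) \in \iota'(A)$, as desired.

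I expect the main obstacle to be precisely the bookkeeping in the previous paragraph: extracting from "two fractions agree in all residue fields over an open set" the honest ring-theoretic statement $b_j a_i - b_i a_j \in \{b_i b_j\}$, and then propagating it from $X_{b_ib_j}$ to the larger set $X_{b_j}$. This is where one uses that $X$ carries the Kolchin topology built from radical differential ideals, so that closed sets correspond to radical differential ideals and $V(E) = \varnothing \iff \{E\} = A$; no deep input beyond that (and quasi-compactness of $\diffspec A$) is needed.
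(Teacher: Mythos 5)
Your reduction to a finite cover $X=\bigcup_i X_{b_i}$ with $\eta=a_i/b_i$ on $X_{b_i}$, and your derivation of $\{b_1,\ldots,b_m\}=A$ from the covering property, match the paper. But the proof breaks at exactly the spot you flagged as subtle. First, the inference ``$b_ja_i-b_ia_j$ lies in every $\frak q\in X_{b_ib_j}$, hence in $\{b_ib_j\}$'' is backwards: $\{b_ib_j\}$ is (at best) the intersection of the primes \emph{containing} $b_ib_j$, i.e.\ of the primes in $V(b_ib_j)=X\setminus X_{b_ib_j}$, not of the primes in $X_{b_ib_j}$. Likewise, a prime $\frak q\in X_{b_j}$ by definition does \emph{not} contain $b_j$, so there is no reason for it to contain $\{b_j\}\supseteq\{b_ib_j\}$; that step is reversed as well. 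More importantly, the identity you are trying to establish there --- $b_i\eta=\iota'(a_i)$ for the representatives as initially chosen --- is simply false. Take $A=k\times k$ with zero derivations, $e_1=(1,0)$, $e_2=(0,1)$, so $X=\{\frak p_1,\frak p_2\}$ with $\frak p_1=0\times k$, $\frak p_2=k\times 0$, and let $\eta(\frak p_1)=1$, $\eta(\frak p_2)=0$. On $X_{e_1}=\{\frak p_1\}$ one may legitimately write $\eta=a_1/b_1$ with $a_1=(1,5)$, $b_1=e_1$; then $(b_1\eta)(\frak p_2)=0$ while $\iota'(a_1)(\frak p_2)=5$. The real content of the missing step is this: for $\frak q\in X_{b_j}\setminus X_{b_i}$ one has $b_i\in\frak q$, so $(b_i\eta)(\frak q)=0$, and what you actually need is $a_i\in\frak q$ --- which nothing in your argument (or in the raw data, as the example shows) guarantees.

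The paper closes this gap with a normalization your proposal is missing: before anything else, replace $a_i$ by $a_ib_i$ and $b_i$ by $b_i^2$. This leaves the fraction $a_i/b_i$ unchanged on $X_{b_i}=X_{b_i^2}$, but forces $a_i\equiv 0$ and $b_i\equiv 0$ in $K(\frak q)$ for every $\frak q\notin X_{b_i}$, so that off $X_{b_i}$ both $(b_i\eta)(\frak q)$ and $a_i$ vanish for the trivial reason that $b_i\in\frak q$; no overlap compatibility between the charts is needed at all. With that one-line fix the rest of your argument (quasi-compactness, $\{b_1,\ldots,b_m\}=A$, pointwise verification on the cover) goes through and coincides with the paper's proof. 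A minor additional point: Section~2 of the paper works with an arbitrary differential ring, so your appeals to ``radical differential ideals are intersections of the primes above them'' (a Keigher-ring fact) are out of place here; the repaired argument does not need them.
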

\begin{proof}
Without the loss of generality, we may suppose that
$U_1\cup\ldots\cup U_m=X$ such that $U_i=X_{b_i}$ and, for any
$\frak p\in U_i$ we have $\eta(\frak p)=a_i/b_i$ in $K(\frak p)$. We
will replace  $a_i$ and $b_i$ by $a_ib_i$ and $b_i^2$, respectively.
Then, for any $\frak p\not\in U_i$, we have $a_i=0$ in $K(\frak p)$.
Then, since all $U_i$ cover $X$, it follows that
$\{b_1,\ldots,b_m\}=A$. Let us show that, for every $\frak p$,
$b_i\eta(\frak p)=a_i$ in $K(\frak p)$ holds. Indeed, if $\frak p\in
U_i$ then $$b_i\eta(\frak p)=b_i(a_i/b_i)=a_i$$ in $K(\frak p)$. If
$\frak p\in U_j\setminus U_i$ then $$b_i\eta(\frak
p)=b_i(a_j/b_j)=0$$ in $K(\frak p)$, but $a_i=0$ in $K(\frak p)$
too. Consequently, $b_i\eta=\iota'(a_i)$.
\end{proof}

\begin{theorem}\label{homeo}
Let $D$ be a differential subring of $\mathcal O'(X)$ containing
the image of $A$ and
$\iota'\colon A\to D$ be the corresponding homomorphism. In this case,
\[
\iota'^*\colon \diffspec D\to \diffspec A.
\]
is a homeomorphism.
\end{theorem}
\begin{proof}

It follows from Lemma~\ref{lem:25} that for any differentially
finitely generated over $A$ subalgebra in $\mathcal O'(X)$ the
hypotheses of Lemma~\ref{steplemma} hold. From the definition of
$\iota'$, it follows that $\ker \iota'$ belongs to the intersection
of all prime differential ideals of $A$. Consequently, for any
differentially finitely generated algebra $D$ the statement is
proven. But any differential algebra can by presented as a direct
limit of its differentially finitely generated subalgebras. Then, it
follows from Lemma~\ref{Limitlemma}  that the statement holds for
any subalgebra $D$.
\end{proof}

\section{Main result}\label{sec:main}

In this section, we reduce the main result to Theorem~\ref{homeo}. We
have two sheaves on the topological space $X$, namely: $\mathcal O$
and $\mathcal O'$. Consider the homomorphism of sheaves
$$\varphi(U)\colon \mathcal O(U)\to\mathcal O'(U)\quad
\varphi(f)(\frak p)=f(\frak p)\mod \frak p.$$ From now we suppose
that the given ring $A$ is a Keigher ring.

\begin{proposition}\label{prop:31}
With the above conventions,  $\ker\varphi(X)$ coincides
with the nilradical of $\mathcal O(X)$.
\end{proposition}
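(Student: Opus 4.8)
The plan is to unwind the definition of the map $\varphi(X)$ and reduce the statement to a pointwise computation at each prime differential ideal, where the Keigher hypothesis does the real work. Concretely, an element $f\in\mathcal O(X)$ lies in $\ker\varphi(X)$ precisely when, for every $\frak p\in X$, the value $f(\frak p)\in A_{\frak p}$ maps to zero in $K(\frak p)$, i.e. $f(\frak p)\in\frak p A_{\frak p}$. On the other hand, $f$ is nilpotent in $\mathcal O(X)$ if and only if $f^n=0$ for some $n$, and since $\mathcal O$ is a sheaf and the stalks are the local rings $A_{\frak p}$ (or at least admit maps to them detecting $f$), $f^n=0$ is equivalent to $f(\frak p)^n=0$ in $A_{\frak p}$ for all $\frak p$. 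So the containment ``nilradical $\subseteq\ker\varphi(X)$'' is immediate: a nilpotent $f$ has $f(\frak p)$ nilpotent, hence in the maximal ideal $\frak p A_{\frak p}$, hence killed by reduction mod $\frak p$.

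The substantive direction is ``$\ker\varphi(X)\subseteq$ nilradical''. First I would take $f\in\ker\varphi(X)$ and use regularity: $X$ is covered by principal opens $X_{b_i}$ on which $f$ is represented as $a_i/b_i$ with $b_i\notin\frak q$ for $\frak q\in X_{b_i}$. The condition $f\in\ker\varphi(X)$ says $a_i/b_i\in\frak q A_{\frak q}$ for all such $\frak q$, which (since $b_i$ is a unit there) means $a_i\in\frak q$ for every $\frak q\in X_{b_i}$; equivalently $a_i\in\frak q$ for every prime differential ideal $\frak q$ containing... well, for every $\frak q$ not containing $b_i$. I would rephrase this as: $a_i$ lies in every prime differential ideal of $A_{b_i}$, i.e. $a_i/1$ is in the nilradical of the differential ring $A_{b_i}$. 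Here is where Keigher enters: for a Keigher ring, the nilradical equals the radical of the zero differential ideal, which is the intersection of all prime differential ideals; and localizations of Keigher rings are Keigher, so $A_{b_i}$ is Keigher and $a_i/1$ being in all prime differential ideals forces $(a_i/1)^{n_i}=0$ in $A_{b_i}$, i.e. $b_i^{m_i}a_i^{n_i}=0$ in $A$ for suitable exponents.

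From $b_i^{m_i}a_i^{n_i}=0$ I want to conclude that $f$ itself is nilpotent in $\mathcal O(X)$. On $X_{b_i}$ we have $f=a_i/b_i$, so $f^{n_i}=a_i^{n_i}/b_i^{n_i}$, and multiplying numerator and denominator by $b_i^{m_i}$ gives $f^{n_i}=b_i^{m_i}a_i^{n_i}/b_i^{n_i+m_i}=0$ in $A_{\frak q}$ for every $\frak q\in X_{b_i}$; hence $f^{n_i}$ restricted to $X_{b_i}$ is the zero section. Taking $N=\max_i n_i$, the section $f^N$ restricts to $0$ on each member of the finite cover $\{X_{b_i}\}$, so by the sheaf axiom $f^N=0$ in $\mathcal O(X)$. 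Thus $f$ is nilpotent, completing the inclusion. The main obstacle, and the place to be careful, is the Keigher step: without it, $a_i$ lying in every prime differential ideal of $A_{b_i}$ would only put $a_i$ in $\frak r(\{0\})$, which need not be the same as the nilradical — the point of Keigher is exactly that $\frak r$ of a differential ideal is again a differential ideal so that $\frak r(\{0\})$ coincides with the ordinary radical $\sqrt{0}$, the nilradical. I would also make sure that passing to the localization $A_{b_i}$ genuinely preserves the Keigher property (a standard fact, since localization commutes with forming radicals of differential ideals) and that the finiteness of the cover is available, which follows from quasi-compactness of $X=\diffspec A$ just as for ordinary affine schemes.
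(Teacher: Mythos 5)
Your proof is correct and follows essentially the same route as the paper's: the same pointwise characterization of $\ker\varphi(X)$, the same reduction to a finite cover by principal opens $X_{b_i}$ with $f=a_i/b_i$, the same use of the Keigher property (preserved under localization) to conclude that $a_i$ is nilpotent in $A_{b_i}$, and the same clearing of denominators to get a uniform power $N$ with $f^N=0$. The only cosmetic difference is that the paper packages the exponents as $(a_ib_i)^{n_i}=0$ before taking the maximum, while you keep $b_i^{m_i}a_i^{n_i}=0$ directly; both yield the same conclusion.
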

\begin{proof}
By definition, $\ker\varphi(X)$ coincides with
\[
\{\,f\in \mathcal O(X)\mid f(\frak p)\in\frak pA_\frak p\,\}.
\]
Let us show that the nilradical is contained in the kernel. Let $f$
be a nilpotent element. Then, for any $\frak p$ we have $f^n(\frak
p)=0$. So, $f^n(\frak p)\in \frak pA_\frak p$. Consequently,
$f(\frak p)\in \frak pA_\frak p$.

Conversely, let $f\in \ker\varphi(X)$. Then, there exists a cover
$U_1\cup\ldots\cup U_n=X$ such that for every $i$ we have $f(\frak
p)=a_i/b_i$ in $A_\frak p$, whenever $\frak p\in U_i$. Without loss
of generality we may suppose that $U_i=X_{b_i}$. Fix some $i$. Then,
$$f(\frak p)=a_i/b_i\in \frak pA_\frak p,$$ for all $\frak p$ not
containing $b_i$. Then, $a_i\in\frak p$ for all $\frak p$ not
containing $b_i$. Therefore, $a_i$ belongs to the intersection of
all prime ideals in $A_{b_i}$. Since $A$ is a Keigher ring
($A_{b_i}$ too), $a_i$ belongs to the nilradical of
$A_{b_i}$~\cite[Corollary~2.9]{J}. Hence, for some $k_i$ we have
$a_i^{k_i}=0$ in $A_{b_i}$. Therefore, for some $m_i$, it follows
that $$b_i^{m_i}a_i^{k_i}=0$$ in $A$. And so, we have that
$(a_ib_i)^{n_i}=0$ for some $n_i$. Let $n=\max\limits_i n_i$. Then,
for every $i$ we have $(a_ib_i)^n=0$ in $A$. Consequently, for every
$i$ and every $\frak p\in U_i$ the following holds:
\[
f^n(\frak
p)=\left(\frac{a_i}{b_i}\right)^n=\frac{(a_ib_i)^n}{b_i^{2n}}=0\mbox{
â } A_{\frak p}.
\]
\end{proof}

\begin{theorem}\label{main}
Let $D$ be a differential subring of $\mathcal O(X)$ containing the
image of $A$ and $\iota\colon A\to D$ be the corresponding
homomorphism. Then,
\[
\iota^*\colon \diffspec D\to \diffspec A.
\]
is a homeomorphism.
\end{theorem}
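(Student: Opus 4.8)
The plan is to reduce Theorem~\ref{main} to Theorem~\ref{homeo} by factoring the inclusion $\iota\colon A\to D\subseteq\mathcal O(X)$ through the sheaf morphism $\varphi(X)\colon\mathcal O(X)\to\mathcal O'(X)$. Set $D'=\varphi(X)(D)\subseteq\mathcal O'(X)$; since $\varphi(X)\circ\iota=\iota'$ and $D$ contains the image of $A$, the ring $D'$ is a differential subring of $\mathcal O'(X)$ containing $\iota'(A)$. Thus Theorem~\ref{homeo} applies to $D'$ and tells us that $\iota'^*\colon\diffspec D'\to\diffspec A$ is a homeomorphism. What remains is to show that the restriction $\varphi(X)|_D\colon D\to D'$ induces a homeomorphism $\diffspec D'\to\diffspec D$; composing the two homeomorphisms then gives the result, because the contraction maps compose correctly.

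The key point for the remaining step is that $\varphi(X)|_D$ is a surjective differential homomorphism whose kernel is contained in the nilradical of $D$. Surjectivity is immediate from the definition of $D'$. For the kernel: by Proposition~\ref{prop:31}, $\ker\varphi(X)$ equals the nilradical of $\mathcal O(X)$, so $\ker(\varphi(X)|_D)=D\cap\mathrm{nil}(\mathcal O(X))\subseteq\mathrm{nil}(D)$. A surjection with nilpotent kernel induces a homeomorphism on differential spectra: every prime differential ideal contains the nilradical, hence contains $\ker(\varphi(X)|_D)$, so prime differential ideals of $D$ correspond bijectively and bicontinuously with prime differential ideals of $D'$ via contraction (this is the differential analogue of the classical fact that $\mathrm{Spec}$ is insensitive to nilpotents, and it follows formally since $V(\ker)=\diffspec D$ and the quotient map is the relevant isomorphism).

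Putting it together: $\iota^* = \iota'^*\circ(\varphi(X)|_D)^*$ as maps $\diffspec D'\to\diffspec D$, wait --- more precisely, writing $\psi=\varphi(X)|_D$, we have $\iota'=\psi\circ\iota$, hence $\iota^*=\iota'^*\circ\psi^*$ where $\psi^*\colon\diffspec D'\to\diffspec D$ and $\iota'^*\colon\diffspec D'\to\diffspec A$. Since $\psi^*$ is a homeomorphism by the previous paragraph and $\iota'^*$ is a homeomorphism by Theorem~\ref{homeo}, it follows that $\iota^*\colon\diffspec D\to\diffspec A$ is a homeomorphism, after identifying $\diffspec D$ with $\diffspec D'$ via $\psi^*$.

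I expect the main obstacle to be the bookkeeping around Proposition~\ref{prop:31}: that proposition is stated for $\mathcal O(X)$ itself, and one must be slightly careful that intersecting its kernel with the subring $D$ still lands inside $\mathrm{nil}(D)$ (which is clear, since a global nilpotent lying in $D$ is a fortiori nilpotent in $D$) and that this is genuinely all of $\ker\psi$. The Keigher hypothesis enters only through Proposition~\ref{prop:31}; once that is in hand, the argument is a formal diagram chase combining it with Theorem~\ref{homeo}, with no further use of the Keigher condition.
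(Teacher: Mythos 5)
Your proposal is correct and follows essentially the same route as the paper: both factor $\iota$ through $\varphi(X)$, use Proposition~\ref{prop:31} to identify $\ker(\varphi(X)|_D)$ with (a subset of) the nilradical of $D$ so that $\diffspec D$ is identified with $\diffspec$ of the image $D'$ in $\mathcal O'(X)$, and then apply Theorem~\ref{homeo} to $D'$. The only cosmetic difference is that the paper writes $D'$ as the quotient $D/\mathrm{nil}(D)$ rather than as the image $\varphi(X)(D)$, which is the same ring.
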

\begin{proof}
Consider the homomorphism $\varphi(X)\colon D\to\mathcal O'(X)$. It
follows from Proposition~\ref{prop:31} that the kernel of
$\varphi(X)$ coincides with the nilradical of $D$. The quotient of
$D$ by its nilradical will be denoted by $D'$. It is clear that
$\diffspec D$ can be identified with $\diffspec D'$. We have the
following sequence of homomorphisms
\[
A\to D'\to \mathcal O'(X).
\]
And the composition coincides with $\iota'$. It follows from
Theorem~\ref{homeo}  that $\diffspec D'\to \diffspec A$
is a homeomorphism. Remembering that $\diffspec D$ is homeomorphic
to $\diffspec D'$, we obtain the desired result.
\end{proof}

\begin{corollary}\label{maincor}
Let $A$ be a Ritt algebra and $\iota\colon A\to\widehat{A}$ be the
canonical homomorphism into the ring of global sections. Then,
\[
\iota^*\colon\diffspec \widehat{A}\to \diffspec A
\]
is a homeomorphism.
\end{corollary}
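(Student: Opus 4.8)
The plan is to derive Corollary~\ref{maincor} as an immediate special case of Theorem~\ref{main}. The only thing that needs to be checked is that the hypothesis of Theorem~\ref{main} is satisfied in the situation of the corollary, namely that $\widehat{A}=\mathcal O(X)$ is itself a differential subring of $\mathcal O(X)$ containing the image of $A$ (which is trivially true, taking $D=\mathcal O(X)$), and — crucially — that a Ritt algebra is a Keigher ring, since Theorem~\ref{main} was proved under the standing assumption that $A$ is Keigher.

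So first I would recall that every Ritt algebra is a Keigher ring. This is a classical fact: if $A\supseteq\mathbb{Q}$ and $\frak a$ is a differential ideal, then for $x$ with $x^n\in\frak a$ one shows by a Leibniz-rule computation that $(x')^{2n-1}\in\frak a$ (differentiating $x^n\in\frak a$ repeatedly and using that the relevant integer coefficients are invertible), hence $x'\in\frak r(\frak a)$; thus $\frak r(\frak a)$ is again a differential ideal. I would cite this (it appears as a standard fact, e.g.\ in Kolchin or in~\cite{J}) rather than reprove it.

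Then I would apply Theorem~\ref{main} with $D=\mathcal O(X)=\widehat{A}$ and $\iota\colon A\to\widehat{A}$ the canonical homomorphism described in Section~\ref{sec:sec1}. Since $\iota$ is a differential homomorphism whose image lies in $\mathcal O(X)$, and $A$ is Keigher, Theorem~\ref{main} gives at once that
\[
\iota^*\colon\diffspec\widehat{A}\to\diffspec A
\]
is a homeomorphism, which is exactly the assertion of the corollary.

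There is essentially no obstacle here; the corollary is a packaging of Theorem~\ref{main}. The one point that genuinely requires a remark — and the only place the reader needs to be told something not already in the excerpt — is the implication ``Ritt algebra $\Rightarrow$ Keigher ring,'' so that the Keigher hypothesis under which Theorem~\ref{main} was established is available. Everything else is a direct substitution $D=\widehat A$ into the already-proved theorem.
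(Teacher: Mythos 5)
Your proposal is correct and matches the paper's (implicit) derivation exactly: the corollary is obtained by taking $D=\widehat{A}=\mathcal O(X)$ in Theorem~\ref{main}, the only substantive point being the standard fact that a Ritt algebra is a Keigher ring, which you correctly identify and justify.
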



\begin{thebibliography}{99}
\bibitem{AM}M.\,F.~Atiyah, I.\,G.~Macdonald. \textit{Introduction to commutative
algebra}. Addison-Wesley. 1969


\bibitem{B1} A.~Buium. \textit{Ritt schemes and torsion theory}. Pacific
Journal of Mathematics 92 (1982), pp.~281--293

\bibitem{B2} A.~Buium. \textit{Differential function fields and moduli of
algebraic varieties}. Lecture Notes in Mathematics. 1226,
Springler-Verlag, Berlin-New York, 1986

\bibitem{CF1} G.~Carr\`a Ferro. \textit{Sullo spettro differenziale di un
anello differenziale}. Le Mathematiche (Catania) 33 (1978),
pp.~1--17

\bibitem{CF2} G.~Carr\`a Ferro. \textit{The ring of global sections of the
structure sheaf on the differential spectrum}. Rev. Roumaine Math.
Pures Appl. 30 (1985), pp.~809--814

\bibitem{CF3} G.~Carr\`a Ferro. \textit{Kolchin schemes}. Journal Pure and
Applied Mathematics 94 (1972), pp.~891--954



\bibitem{K1} W.~Keigher. \textit{Adjunctions and comonads in differential
algebra}, Pacific Journal of Mathematics 59 (1975), pp.~99--112

\bibitem{K2} W.~Keigher. \textit{Prime differential ideals in differential
rings}. Contribution to Algebra: A collection of papers dedicated to
Ellis Kolchin, Bass, Cassidy, Kovacic, eds. Academic Press, New
York, 1977, pp.~239--249

\bibitem{K3} W.~Keigher. \textit{On the structure presheaf of a differential
ring}, Journal of Pure and Applied Algebra 27 (1983), pp.~163--172

\bibitem{K4} W.~Keigher. \textit{On the quasi-affine scheme of a differential
rings}. Advances in Mathematics 42(2) (1981), pp.~143--153

\bibitem{K5} W.~Keigher. \textit{Differential schemes and premodels of
differential fields}. Journal of Algebra 79 (1982), pp.~37--50

\bibitem{J} J.\,J.~Kovacic. \textit{Differential Schemes}, Differential Algebra and Related Topics,
Newark, NJ, 2000, pp.~71--94.

\bibitem{J1} J.\,J.~Kovacic. \textit{Global sections of diffspec}, Journal of Pure and Applied Algebra 171
(2002), pp.~265--288



\end{thebibliography}
\end{document}